\newtheorem{defin}{Definition}[section]
\newtheorem{thm}[defin]{Theorem}
\newtheorem{cor}[defin]{Corollary}
\newtheorem{lem}[defin]{Lemma}
\newtheorem{prop}[defin]{Proposition}
\newcommand{\Z}{\mathbb{Z}}
\begin{document}
\title[\resizebox{4.7in}{!}{A family of polycyclic groups over which the uniform conjugacy problem is NP-complete}]{A family of polycyclic groups over which the conjugacy problem is NP-complete}
\author{Bren Cavallo}
\address{Bren Cavallo, Department of Mathematics, CUNY Graduate Center, City University of New York}
\email{bcavallo@gc.cuny.edu}

\author{Delaram Kahrobaei}
\address{Delaram Kahrobaei, CUNY Graduate Center, PhD Program in Computer Science and NYCCT, Mathematics Department, City
University of New York} \email{dkahrobaei@gc.cuny.edu}

\begin{abstract}
In this paper we study the conjugacy problem in polycyclic groups.  Our main result is that we construct polycyclic groups $G_n$ whose conjugacy problem is at least as hard as the subset sum problem with $n$ indeterminates.  As such, the uniform conjugacy problem over the groups $G_n$ is NP-complete where the parameters of the problem are taken in terms of $n$ and the length of the elements given on input.
\end{abstract}

\maketitle

\section{Introduction}
\nocite{*}
Given a finitely presented group $G$, the conjugacy decision problem for $G$ asks if two elements $u, v \in G$ are conjugate.  Along with the word problem and isomorphism problem, it was one of the original group theoretic problems introduced by Dehn in 1911.  In the context of non-commutative group based cryptography, one often studies the search variant: given two conjugate elements in $G$, find a third element of $G$ conjugating one to the other.  In 1999 Anshel, Anshel, and Goldfeld \cite{anshel1999algebraic} created a key exchange protocol that relied on solving the conjugacy search problem multiple times and proposed braid groups as the platform group.   In the years since, different parties \cite{garber2006length, myasnikov2007length} have shown that heuristic attacks can in fact break the protocol when it is done over braid groups.  Other cryptographic protocols using the conjugacy search problem include \cite{kahrobaei2009decision, kahrobaei2006non, DBLP:journals/corr/abs-1210-7493, ko2000new}.

\

In 2004 Eick and Kahrobaei \cite{eick2004polycyclic} proposed polycyclic groups as a secure platform for AAG and offered computational evidence.  Later Garber, Kahrobaei, and Lam \cite{garber2013analyzing} experimentally showed that polycyclic groups were resistant to many of the heuristic attacks that are strong against braid groups.  In this paper we offer theoretical evidence that the conjugacy decision and search problems over polycyclic groups are difficult.  We construct polycyclic groups $G_n$ whose conjugacy search and decision problems are at least as hard as the subset sum search and decision problems in $n$ indeterminates which is well known to be NP-complete.   The $G_n$ also have the additional property that algebraic computations such as conjugation and collection can be performed quickly when group elements are represented by exponent vectors.  In this way, a polynomial time algorithm that solves either conjugacy problem in these groups would imply P = NP.

\

We devote {\bf Sections 2, 3}, and {\bf 4} to the conjugacy problem in groups, preliminaries on polycyclic groups, and the subset sum problem.  In {\bf Section 5} we introduce the twisted subset sum problem ($TSSP$) and discuss a dynamic programming algorithm that solves it. In {\bf Section 6} we explicitly construct the groups $G_n$ that we will be working over and show that the $TSSP$ reduces to the conjugacy decision problem in the $G_n$.  In {\bf Section 7} we show that the conjugacy problem in the $G_n$ is in NP.  We also note that the same computations can also be used to show that multiplication and collection can also be performed in polynomial time.  In {\bf Section 8} we reduce the subset sum problem to the $TSSP$ which implies that the conjugacy problem in the $G_n$ is NP-complete.

\section{The Complexity of Conjugacy Problem in Polycyclic Groups}
The conjugacy problem in a finitely presented group, $G$, can be defined as follows: given two elements $u,v \in G$ determine if there exists $x \in G$ such that $xux^{-1} = v$. For a general group, this problem is known to be undecidable. See \cite{collins1977conjugacy} for an example of a group with an undecidable conjugacy problem that contains an index 2 subgroup with decidable conjugacy problem.

\

In the case of polycyclic groups the conjugacy problem was shown to be decidable independently by Formanek \cite{formanek1976conjugate} and Remesslennikov \cite{remeslennikov1969conjugacy}. More precisely, they showed that any virtually polycyclic group is conjugacy separable- if $u,v \in G$ are not conjugate, then there exists a finite homomorphic image in which the images of $u$ and $v$ are not conjugate. This allows us to create an algorithm for the conjugacy problem by performing two simultaneous tasks. We conjugate $u$ by elements of $G$ and check if the conjugation is equal to $v$. This can be done in polycyclic groups by comparing normal forms of elements. We also simultaneously enumerate all homomorphisms from $G$ into a finite group and check if the images of $u$ and $v$ are conjugate. At least one of these processes will eventually terminate, and we would have our answer. Clearly, this algorithm may take very long in general and the nature of the algorithm does not lend itself to complexity analysis.

\

Later on, Eick and Ostheimer (see \cite{eick2001algorithms, eick2003orbit}) developed a more practical algorithm for solving the conjugacy problem in polycyclic groups, but did not perform a complexity analysis. It was pointed out in \cite{eick2004polycyclic} that the algorithm of Eick and Ostheimer is likely slow since it may involve computation of the unit group of an algebraic number field. Also in \cite{eick2004polycyclic}, Eick and Kahrobaei tested the algorithm on metabelian polycyclic groups and noted that as the Hirsch length increased, the average time it took to solve the conjugacy increased dramatically. In a group of Hirsch length 14, the average time the algorithm took was over 100 hours.

\

More recently, Sale \cite{sale2011short, sale2012conjugacy} studied the geometry of conjugacy in solvable groups and found upper bounds on the conjugacy length function for certain polycyclic groups. The conjugacy length function, $CLF_G (n)$ of a group $G$ is defined as follows:
$$CLF_G (n) = \max \{ \min \{ |w| : wu = vw \} : |u| + |v| \leq n, \, u \, \mbox{and} \, v \, \mbox{are conjugate in} \, G\}$$
In his work, Sale specifically studied groups of the form $\Z^n \rtimes_\phi \Z^k$ with certain technical conditions on $\phi$. He showed that under these conditions, if $k = 1$ then $CLF_G(n)$ is at most linear in $n$ and that if $k > 1$, $CLF_G (n)$ is at most exponential in $n$. Note that in this case $|u|$ and $|v|$ are the geodesic lengths of $u$ and $v$ respectively. Also note that this result depends on the presentation of $G$, which Sale fixes.

\

In this vein, Sale showed that you can solve the conjugacy problem in such groups by conjugating $u$ by all words of length less that or equal to $CLF_G (|u| + |v|)$ to attempt to find an element that conjugates $u$ to $v$. If no such word is found, then $u$ and $v$ are not conjugate. In this case, we can also check equality of words in $G$ by comparing normal forms of elements. These bounds give a better idea of how long a brute force algorithm would take in these groups would take. Namely, we know that to perform such an algorithm, at most $g(CLF_G (|u| + |v|))$ potential conjugators must be checked, where $g$ is the growth function of $G$. As such, in a polycyclic group with exponential growth, Sale's results say that for $k = 1$ at most an exponential words need to be checked while for $k > 1$ the upper bound is double exponential.

\

Aside from Sale's work, very little is actually known about the complexity of the conjugacy problem in non-abelian polycyclic groups. Based on experiments in \cite{eick2004polycyclic, garber2013analyzing} it is conjectured that existing algorithms are super polynomial and that the complexity increases with respect to Hirsch length and length of words.   It is still open whether or not there is a polynomial time algorithm for the conjugacy with respect to Hirsch length and length of elements.  Also, very little known about the complexity of the conjugacy problem if we fix the Hirsch length and study a single polycyclic group and no polynomial time algorithm has been found in that case.  Additionally, complexity of algorithms in the cases of variations in which the polycyclic group is given by an arbitrary presentation is unknown.

\

We close this section by mentioning some recent results about the complexity of the conjugacy problem in other solvable groups. In \cite{sale2013geometry}, Sale found upper bounds for the conjugacy length functions of free solvable groups, namely that they each have a cubic upper bound. In the same paper, he also found an upper bound for the conjugacy length function of the wreath product of two groups in terms of their own conjugacy length functions.  Similarly, Vassileva found \cite{vassileva2011polynomial} a polynomial time algorithm for the conjugacy problem in free solvable groups.  She also found that if two groups, $A$ and $B$, have polynomial time algorithms for their conjugacy problems, and if $B$ has a polynomial time algorithm for its power problem, then there is a polynomial time algorithm for the conjugacy problem in $A \wr B$.  In \cite{diekert2013conjugacy}, Diekert, Miasnikov, and Wei{\ss} studied the circuit complexity of the conjugacy problem in the Baumslag - Solitar group, $BS(1,2)$ and showed that its conjugacy problem is $TC^0$ - complete.  They also showed that the conjugacy problem in the Baumslag - Gersten group, $G(1, 2)$, has a polynomial time algorithm in a strongly generic setting.  Namely, they show that for almost all inputs, the conjugacy problem can be decided in polynomial time.

\section{Poly-$\Z$ Groups}
A group $G$ is \emph{polycyclic} if it has a subnormal series with cyclic quotients.  Namely, $G$ has subgroups $ G_0, G_1,  \cdots, G_{n}$ such that

\begin{equation} \{1\} = G_{0} \triangleleft G_1 \triangleleft \cdots \triangleleft G_n  = G \end{equation}

\

and $G_{i+1}/G_{i}$ is a cyclic group.

\

In this section, we will summarize a variety of results on polycyclic groups that can be found in \cite{drutu2011lectures, eick2001algorithms} that will be used in the remainder in this paper.

\

Given a subnormal series as in (1), one can find a \emph{polycyclic generating set}, $\{g_1, \cdots, g_n\}$ where $G_{i} = \langle g_i, G_{i-1} \rangle$ for $1 \leq i \leq n$.  With respect to this generating set, each group element $g \in G$ can be represented as $g_1^{k_1} \cdots g_n^{k_n}$ and such a representation is called its \emph{normal form}.   We also call each $g_i^k$ a \emph{syllable}. For every polycyclic group, there exists a polycyclic generating set such that any word has a unique normal form.  The process of converting an arbitrary word to its normal form is called \emph{collection}.

\

Of specific interest to us in this paper are polycyclic groups where $G_i/G_{i-1} \simeq \Z$ for each $i$.  Such a group is called poly-$\Z$ as each quotient is isomorphic to $\Z$.  If this is the case, then $G$ is obtained from the final non-trivial group in the subnormal series $G_1 \simeq \Z$ by successive semi-direct products with $\Z$ as follows.  It is a standard result (see \cite{drutu2011lectures} for instance) that if $G/G_{n-1} \simeq \Z$ then $G \simeq G_{n-1} \rtimes_\phi \Z$.  If we take $g_n$ as the generator of $\Z$, then $\phi$ is given by conjugating elements of $G_{n -1}$ by $g_n$.  For reference, $G_{n-1} \rtimes_\phi \Z$ is the group with elements $w g_n^k$ with $w \in G_{n-1}$ and multiplication given by:

$$(w g_n^k)(w' g_n^l) = w \phi^k (w') g_n^{k + l}$$

Proceeding inductively, we see that $G$ can be written as:

\begin{equation} ( \cdots ((\Z \rtimes_{\phi_1} \Z) \rtimes_{\phi_2} \Z) \rtimes_{\phi_3} \cdots ) \rtimes_{\phi_{n-1}} \Z \end{equation}

where $\phi_j$ is conjugation by $g_{j+1}$.

\

The groups we will be interested in will be constructed in this fashion by explicitly describing the different $\phi_i$.  See \cite{DEPS13} in which the authors use the same construction for more details.  In the following, we will take $g_i$ as the generator of the $i^{th}$ $\Z$ in the semi-direct product form.  From the multiplication rules of the semi-direct product, one can see that $g_j g_i = \phi_{j-1}(g_i) g_j$ for $i < j$.  By using this identity, it is possible to then put any arrangement of letters into normal form so that any $g_i$ appears to the left of $g_j$ when $i < j$.  We also define the \emph{Hirsch length} as the number of $\Z$'s in the semi-direct product formulation of the poly-$\Z$ group.  The Hirsch length is an isomorphism invariant, so while different automorphisms in the construction of the poly-$\Z$ group may lead to  isomorphic groups, the number of factors is necessarily the same.

\

Any word $w = g_1^{k_1} \cdots g_n^{k_n}$ can be represented uniquely by its \emph{exponent vector}, $[k_1, \cdots, k_n]$.  We can then take the length of $w$ to be the length of its corresponding exponent vector, $l(w) = \sum_{i = 1}^n log(|k_i|) = O(n log(K))$ where $K$ is an upper bound of the absolute value of all of the exponents.  This measure of length, is somewhat different than many of the standard ones used when studying algorithms in groups.  Most often, one would take the length of the word to be its distance from the identity in the Cayley graph equipped with the word metric.  In this scenario, we are considering normal forms of group elements, rather than geodesic forms, because they are often used for cryptographic and other algorithmic applications in polycyclic groups.  For instance, in a practical setting, one cannot necessarily generate random words that are of the shortest length and the complexity of converting a word in a polycyclic group into a shortest length representative is unknown.  Additionally, it is more practical when possible to work with group elements as a tuple of exponents rather than generator by generator, making the size of the exponent vector a natural measure length.  In the groups we will be working with, algebraic operations (multiplication, collection, conjugation) can be computed effectively with a Turing machine (see {\bf Section 7}) when inputs are taken in terms of exponent vectors.  This is not necessarily the case in other scenarios where group elements are dealt with generator by generator.  It is also important to note that this measure of length is necessary for the results in the following sections to hold.

\section{Subset Sum Problem}
The subset sum problem, or $SSP$, is the following:  given a set of integers, $L = \{k_1, k_2, \cdots , k_n\}$, and an integer, $M$, determine if there exists subset of $L$ that sums to $M$.   This can also be rephrased as determining if there is a solution to the equation:
$$k_1 x_1 +\cdots + k_n x_n = M \,\,\, \mbox{where} \,\,\, x_i \in \{0,1\}.$$

We can bound the size of the problem from above by considering the length of the list, $n$, and an upper bound on the absolute value of the entries, $K$.  In doing so, the length of the problem can be seen to have length $O(nlog(K))$.  We will also label this instance of the $SSP$, $SSP(L, M)$, or just $(L, M)$ when there is no ambiguity.

\

The $SSP$ is NP-complete, meaning that the existence of a deterministic polynomial-time Turing machine that solves it would imply that P = NP.  In fact, it was originally introduced by Karp as one of his 21 NP-complete problems.  Despite it being NP-complete, there exists a pseudo-polynomial algorithm via dynamic programming (see \cite{kellerer2004knapsack}).  Namely there exists a deterministic algorithm that runs in polynomial time when the length of the problem is taken in terms of the actual numerical entries of the list rather than the number of digits needed to represent them.  As such, the existence of such an algorithm and the NP-completeness of the problem doesn't imply that P = NP.

\

For the purposes of cryptography, we consider the search version of the $SSP$:  given that a subset of $L$ sums to $M$, actually find such a subset.  From the outset, it is not immediately clear how the two problems are related, but one can show that a polynomial time algorithm for one would lead to a polynomial time algorithm for the other.  First we show how an algorithm for the decision problem can be applied at most $n - 1$ times to make an algorithm for the search problem.  This can be done by first checking if the $SSP$, $(L \setminus \{k_n\}, M)$ has a solution.  If not, then we know $k_n$ is a part of our solution and proceed by checking  $(L  \setminus \{k_{n-1}, k_n\}, M - k_n)$.  Otherwise, we know we can create a solution without $k_n$ and proceed by checking $(L  \setminus \{k_{n-1}, k_n\}, M)$.  By doing this repeatedly, we will have eventually found a subset summing to $M$.  In the worst case scenario, we will have reached the end of the list and performed the decision algorithm $n - 1$ times.  Note that since we are in an instance of the search problem, a solution is assumed to exist, so it is not necessary to run the decision algorithm on $(L, M)$.

\

On the other hand, if there were a polynomial time algorithm that solved the search version of the $SSP$, we could use it to prove existence of a polynomial time algorithm for the decision problem.  Rather than give a formal proof, we will just sketch one omitting certain details.  Given a polynomial time Turing machine, $M$, for the $SSP$ search problem, there exists a polynomial $P(n)$ such that for any input $x$, the number of steps $M$ takes on input $x$ is less than or equal to $P(|x|)$.  We can then create another Turing machine, $M'$, that on input $y$ performs the same steps as $M$ for $P(|y|)$ steps.  If $M'$ has not yet finished, it then hits its final state and outputs ``no" as the answer.  $M'$ is then a polynomial time Turing machine for any instance of the $SSP$ decision problem.  Either it terminates in less than $P(|y|)$ steps in which case $M'$ has found a solution for $y$ and outputs ``yes" or $M'$ takes longer than $M$ would if there were a solution, implying that there isn't one, and so $M'$ outputs ``no".  As such, a polynomial time algorithm for either the $SSP$ or its search variant would imply existence of a polynomial time algorithm for the other.

\

We also will use the notion of a polynomial time reduction.  We say that a decision problem, $Q$, can be reduced to a decision problem, $R$, in polynomial time if there exists a polynomial time mapping, $f$, from instances of $Q$ to instances of $R$, such that an instance $x$ is a ``yes" instance of $Q$  if and only if $f(x)$ is a ``yes" instance of $R$.  Such a mapping also must only increase the lengths of instances at most polynomially.   We also write $Q \leq_p R$ to say that $Q$ polynomial time reduces to $R$.

\

 If such an $f$ exists, then a polynomial time algorithm for $R$ would imply a polynomial time algorithm for $Q$.  Given an instance $x$ of $Q$, we can compute $f(x)$ and then perform our polynomial time decision algorithm for $R$.  As such, a decision problem, $A \in \mbox{NP}$, is NP-complete if $B \leq_p A$ for all $B \in \mbox{NP}$ and one can prove a problem $C \in \mbox{NP}$, is NP-complete if $A \leq_p C$ where $A$ is NP-complete.  Finally, note that polynomial time reductions are transitive: $A \leq_p B$ and $B \leq_p C$ imply $A \leq_p C$.

\section{The Twisted Subset Sum Problem}

Given a list $L = \{k_1, \cdots , k_n \}$ and an integer $M$, the twisted subset sum problem ($TSSP$) is determining if the following equation has a solution:
$$k_n x_n (-1)^{x_1 + x_2 + \cdots x_{n - 1}} + k_{n-1} x_{n-1} (-1)^{x_1 + x_2 + \cdots + x_{n-2}} + \cdots + k_2 x_2 (-1)^{x_1} + k_1 x_1 = M$$

where $x_i \in \{0,1\}$.  Note that we could trivially let $x_i$ be any number and replace $x_i$ with $x_i \bmod{2}$ in the above equation.  For the remainder of the paper, we write $x'_i = x_i \, \bmod{2}$.

\

Before continuing, we would like to remark that dynamic programming leads to a polynomial time solution to the $TSSP$ when the coefficients are taken to be in unary and briefly describe the algorithm.    In short, we create an array, $A$, where rows go over $1$ through $n$ and the columns go over all possible sums that can be made by the list.  At each entry in the array, $A(i , j)$, we ask if there is a solution to $TSSP(\{k_1, \cdots , k_i\}, j)$ and also record whether or not we achieved our sum from adding or subtracting the final number.

\

Note that unlike with the standard dynamic programming algorithm for the subset sum problem, we will potentially put more than one item into each space in the array.  This is because, the way you arrive at a sum depends on what value the following number will be.  Therefore, it may be important to distinguish between arriving at a certain number after a subtraction versus arriving at the same number after an addition.  As such, it might be technically best to think of the array as a multi-dimensional array, but for practicality we will consider a two dimenisonal array of lists.

\

\begin{enumerate}[1.]
\item Create an array, $A$, where the rows are labeled $1$ through $n$ and the columns are labeled $-S$ through $S$ where $S = |k_1| + \cdots + |k_n|$.
\item Set $A(1, j) := (T, 1)$ if $j = k_1$, otherwise, set $A(1, j)$ to $F$.  The $1$ indicates that the next item will be subtracted.  In the future we may also see $(T, 0)$ to indicate that the next item will be added.  This corresponds to the sum of the $x_i$ modulo $2$.
\item For each $A(i, j)$ where $i > 1$ do the following (we do this recursively so we compute row 2 before row 3):
\begin{enumerate}[i.]
\item If $k_i = j$ add a $(T, 1)$ into $A(i, j)$.
\item If $(T, \epsilon) \in A(i - 1, j)$ where $\epsilon \in \{-1, 1\}$, then add a $(T, \epsilon)$ into $A(i, j)$.  As such, $A(i -1, j) \subset A(i,j)$.
\item If $(T, 0) \in A(i - 1, j - k_i)$ add $(T, 1)$ to $A(i, j)$.
\item If $(T, 1) \in A(i - 1, j + k_i)$ add $(T, 0)$ to $A(i, j)$.
\item Otherwise set $A(i, j)$ equal to $F$.
\end{enumerate}
\end{enumerate}

\

In order to solve $TSSP(\{k_1, \cdots , k_n\}, M)$ we fill the array, $A$,  and check if $A(n, M)$ contains either $(T, 0)$ or $(T, 1)$.  Note that filling this array can be done in polynomial time when the coefficients are taken in unary as we do polynomially many elementary operations $n \times 2S$ times. On the other hand, if the coefficients are taken in binary, $S$ is exponential in the size of the coefficients and so filling the entire array would be of exponential time complexity.  We later show in {\bf Section 7} that the $TSSP$ is NP-Complete when the entries are taken in binary, meaning that there exists a polynomial algorithm solving it if and only if P = NP.

\section{The conjugacy problem over the groups $G_n$}

In this section, we show how any instance of the $TSSP$ where the set of integers has length $n$, can be turned into an instance of the conjugacy problem of a polycyclic group with Hirsch length $2n + 1$.  Since the reduction is polynomial, we prove that $TSSP$ polynomial time reduces to the conjugacy problem over the $G_n$.

\

The group $G_n$ will be constructed as follows:

$$( \cdots ((\Z \rtimes_{\phi_1} \Z) \rtimes_{\phi_2} \Z) \rtimes_{\phi_3} \cdots ) \rtimes_{\phi_{2n}} \Z$$

where $$\phi_{2i - 1} (g_j) =
  \begin{cases} g_1^{-1} & \text{if } j = 1 \\
g_j &\text{otherwise}
\end{cases}$$

\

 and

$$\phi_{2i} (g_j) =
  \begin{cases} g_1 g_j & \text{if } j = 2i \\
g_j &\text{otherwise }
\end{cases}$$

\

The multiplicative structure of $G_n$ can then be seen as such: if $j$ is even, $g_j g_1 = g_1^{-1} g_j$ and $g_{j+1} g_j = g_1 g_j g_{j+1}$. The following lemma, is a consequence of these multiplicative identities and will assist us in collecting words throughout the paper.

\

\begin{lem}
Let $j$ be even and $a,b \in \Z$.  Then $g_j^a g_1^b = g_1^{b(-1)^a} g_j^a$ and $g_{j+1}^a g_j^b = g_1^{a b'} g_j^{b} g_{j+1}^a$
\end{lem}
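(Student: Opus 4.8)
The plan is to prove both identities by induction on the exponent $a$, starting from the two basic relations $g_j g_1 = g_1^{-1} g_j$ (for $j$ even) and $g_{j+1} g_j = g_1 g_j g_{j+1}$, which were derived directly from the defining automorphisms $\phi_{2i-1}$ and $\phi_{2i}$. First I would handle the case $a \ge 0$, then observe that the $a < 0$ case follows by inverting the identity established for $|a|$ (and, for the second identity, by a short separate check that the relation $g_{j+1}^{-1} g_j = g_1^{-1} g_j g_{j+1}^{-1}$ holds — it comes from rewriting $g_{j+1} g_j = g_1 g_j g_{j+1}$ as $g_j = g_{j+1}^{-1} g_1 g_j g_{j+1}$ and using that $g_1$ commutes with nothing problematic here, or more cleanly by just conjugating the base relation).

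For the first identity, $g_j^a g_1^b = g_1^{b(-1)^a} g_j^a$ with $j$ even: the base case $a = 0$ is trivial and $a = 1$ is the relation $g_j g_1^b = g_1^{-b} g_j$, which itself follows by an inner induction on $b$ (or by noting $g_j g_1^b g_j^{-1} = (g_j g_1 g_j^{-1})^b = g_1^{-b}$). For the inductive step, write $g_j^{a+1} g_1^b = g_j (g_j^a g_1^b) = g_j g_1^{b(-1)^a} g_j^a = g_1^{-b(-1)^a} g_j^{a+1} = g_1^{b(-1)^{a+1}} g_j^{a+1}$, using the $a=1$ case in the middle step. This is the routine part.

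For the second identity, $g_{j+1}^a g_j^b = g_1^{ab'} g_j^b g_{j+1}^a$ where $b' = b \bmod 2$: I would again induct on $a \ge 0$, with the key subtlety being that $g_1$ does not commute with $g_j$ (since $j$ is even, $g_j g_1 = g_1^{-1} g_j$, equivalently $g_1 g_j = g_j g_1^{-1}$, so $g_1^c g_j^b = g_j^b g_1^{c(-1)^b}$ by the first identity read backwards). For the base case $a = 1$, I need $g_{j+1} g_j^b = g_1^{b'} g_j^b g_{j+1}$, which follows by induction on $b$ from $g_{j+1} g_j = g_1 g_j g_{j+1}$ together with the commutation of $g_1$ past $g_j$ — each time we push $g_{j+1}$ past one more $g_j$ we pick up a $g_1$, but pushing that new $g_1$ leftward past the accumulated $g_j^{\text{stuff}}$ flips its sign, producing the alternating sum that collapses to $b \bmod 2$. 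The inductive step on $a$ is then $g_{j+1}^{a+1} g_j^b = g_{j+1}(g_{j+1}^a g_j^b) = g_{j+1} g_1^{ab'} g_j^b g_{j+1}^a$, and now I must commute $g_{j+1}$ past $g_1^{ab'}$: since $g_{j+1} g_1 = g_1 g_{j+1}$ (because $\phi_{2i}$ and $\phi_{2i-1}$ only affect $g_1$ when acting via the generator indexed $2i$ or on $g_1$ itself — one checks $g_{j+1}$ commutes with $g_1$ for all even $j$ from the defining relations, as $g_1$ is fixed by every $\phi$ except being sent to $g_1^{\pm1}$, and $g_{j+1} g_1 g_{j+1}^{-1} = \phi_j(g_1) = g_1$ since $j$ is even means $j \ne 1$ and $j \ne $ the relevant even index only when... ) — so this commutation needs a small verification, after which $g_{j+1} g_1^{ab'} g_j^b g_{j+1}^a = g_1^{ab'}(g_{j+1} g_j^b) g_{j+1}^a = g_1^{ab'} g_1^{b'} g_j^b g_{j+1}^{a+1} = g_1^{(a+1)b'} g_j^b g_{j+1}^{a+1}$.

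The main obstacle I anticipate is the base-case-in-$b$ computation for the second identity, i.e. showing $g_{j+1} g_j^b = g_1^{b \bmod 2} g_j^b g_{j+1}$: one has to carefully track that each of the $b$ commutations of $g_{j+1}$ past a $g_j$ emits a $g_1$ which, when transported to the far left past an already-built block of $g_j$'s, alternates in sign, so the net power of $g_1$ is $1 - 1 + 1 - \cdots$ ($b$ terms) $= b \bmod 2$. I would record the needed sign-flip rule $g_1 g_j^b = g_j^b g_1^{(-1)^b}$ (equivalently $g_j^b g_1 = g_1^{(-1)^b} g_j^b$, a direct consequence of the first identity) as a preliminary observation, and also note $g_{j+1}$ commutes with $g_1$ as a second preliminary observation, both following immediately from the defining automorphisms; with those in hand the whole lemma is a sequence of short inductions and the extension to negative exponents by taking inverses.
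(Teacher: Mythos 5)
Your proposal is correct and follows essentially the same route as the paper: establish the $a=1$ cases from the defining relations, iterate on $a$ using the fact that $g_{j+1}$ commutes with $g_1$ (which, as you note, follows from $\phi_j(g_1)=g_1$ for $j$ even). The only cosmetic difference is that the paper gets the first identity in one line from $g_j^a g_1^b g_j^{-a} = \phi_{j-1}^a(g_1)^b$ and handles your alternating-sign induction in $b$ by the shortcut $(g_1 g_j)^2 = g_j^2$, so that $(g_1g_j)^b = g_1^{b'}g_j^b$ directly.
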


\begin{proof}
To prove the first identity, just check that $g_j^a g_1^b = \phi_{j  - 1}^a (g_1^b) g_j^a = (\phi_{j  - 1}^a (g_1))^b g_j^a = (g_1^{(-1)^a})^b  g_j^a =  g_1^{b(-1)^a} g_j^a$.  For the second identity, we first check the case $a = 1$.  We then have $g_{j+1} g_j^b = (\phi_j (g_j))^b  g_{j + 1} = (g_1 g_j)^b g_{j + 1}$.  Now note that $(g_1 g_j)^2 = g_1 g_j g_1 g_j = g_1 g_1^{-1} g_j g_j = g_j^2$ from the first part of the lemma.  Therefore if $b = 2l$ is even, then $(g_1 g_j)^b = ((g_1 g_j)^2)^l = (g_j^2)^l = g_j^b$.  Using the same logic, if $b$ is odd,  $(g_1 g_j)^b = g_1 g_j^b$.  Therefore, $g_{j+1} g_j^b = (g_1 g_j)^b g_{j + 1} = g_1^{b'} g_j^b g_{j + 1}$.

\

To continue with the more general case, $g_{j+1}^a g_j^b = g_{j+1}^{a - 1} g_1^{b'} g_j^b g_{j+1} = g_1^{b'} g_{j+1}^{a - 1} g_j^b g_{j+1}$ and after iterating this computation one obtains $g_{j+1}^a g_j^b = g_1^{a b'} g_j^{b} g_{j+1}^a$.

\end{proof}

We now show that the $TSSP$ instance, $TSSP(\{k_1, k_2 \cdots , k_n\}, M)$, has a solution if and only if $g_3^{k_1} g_5^{k_2} \cdots g_{2n + 1}^{k_n} \sim g_1^{-M} g_3^{k_1} g_5^{k_2} \cdots g_{2n + 1}^{k_n}$.  Our general strategy will be as follows: conjugate  $g_3^{k_1} g_5^{k_2} \cdots g_{2n + 1}^{k_n}$ by a generic word in $G_n$ and collect.  First, note that from the multiplication rules above, it can be seen that any $g_i$ where $i$ is odd, commutes with $g_3^{k_1} g_5^{k_2} \cdots g_{2n + 1}^{k_n}$.  It can also be seen from the lemma that conjugating by one of the generators with even index does not introduce any generators with even index in the collected word.  Therefore, without loss of generality, we can assume that our generic word is of the form $g_2^{x_1} g_4^{x_2} \cdots g_{2n}^{x_n}$ because adding in any generators with odd index doesn't affect the conjugated product.  In {\bf Theorem 6.2}, we prove:

$$(g_2^{x_1} g_4^{x_2} \cdots g_{2n}^{x_n})(g_3^{k_1} g_5^{k_2} \cdots g_{2n + 1}^{k_n})(g_2^{x_1} g_4^{x_2} \cdots g_{2n}^{x_n})^{-1}= $$
$$g_1^{-p(k_1, \cdots, k_n, x_1,  x_2, \cdots, x_n)} g_3^{k_1} g_5^{k_2} \cdots g_{2n + 1}^{k_n}$$

where $p(k_1, \cdots, k_n, x_1,  x_2, \cdots, x_{n}) =$

$$k_n x'_n (-1)^{x_{n-1} + x_{n - 2} + \cdots + x_1} +  k_{n-1} x'_{n-1} (-1)^{x_{n-2} + \cdots + x_1} + \cdots +  k_2 x'_2 (-1)^{x_1} +  k_1 x'_1$$

Notice then, that finding $x_i$ such that $p(k_1, \cdots, k_n, x_1,  x_2, \cdots, x_{n}) = M$ is exactly the $TSSP(\{k_1, k_2 \cdots , k_n\}, M)$. As such, the two words are conjugate if and only if $TSSP(\{k_1, k_2 \cdots , k_n\}, M)$ has a solution.  Additionally, the length of the inputs to the problems are only off by a polynomial. It can be seen that the length of both the $TSSP$ and the conjugacy problem are $O(n \log(K))$ where $K$ and $n$ are chosen as before.  It is also clear that the transformation is efficient to compute.  Since the general conjugacy problem in $G_n$ includes all of these instances we then have that the conjugacy problem in $G_n$ is polynomial time reducible to the $TSSP$ with $n$ indeterminates.

\

 It is worth noting, that here is where our measure of length of group elements is crucial.  If we had the length of elements in $G$ be measured as $|k_1| + \cdots + |k_n|$, then the conjugacy problem would be exponentially larger than an instance of $TSSP$ and we would not have a polynomial time reduction.  Also, as we pointed out in {\bf Section 5}, the $TSSP$ has a polynomial time solution using dynamic programming when the elements in the list are taken in unary, which then implies that the conjugacy problem with the more standard measure of length of elements in normal form would also have a polynomial time solution.

\

\begin{thm}
$TSSP$ with $n$ indeterminates is polynomial time reducible to the conjugacy problem in the group $G_n$ of Hirsch length $2n + 1$.
\end{thm}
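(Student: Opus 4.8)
The plan is to make the reduction fully explicit and then verify it. Define $f$ by sending the instance $TSSP(\{k_1,\cdots,k_n\},M)$ to the pair of elements
\[ u = g_3^{k_1}g_5^{k_2}\cdots g_{2n+1}^{k_n}, \qquad v = g_1^{-M}\,g_3^{k_1}g_5^{k_2}\cdots g_{2n+1}^{k_n} \]
of $G_n$, recorded as exponent vectors in $\Z^{2n+1}$. Computing $f$ is immediate, and $f(L,M)$ has length $O(n\log K)$, where $K$ bounds the $|k_i|$ and $|M|$; this is linear in the length $O(n\log K)$ of the $TSSP$ instance, so $f$ runs in polynomial time and blows up lengths only polynomially. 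The theorem therefore follows once I establish the equivalence that $u$ and $v$ are conjugate in $G_n$ if and only if $TSSP(L,M)$ has a solution; everything else is the bookkeeping just described.

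The first step toward that equivalence is to cut the candidate conjugators down to the family $w(x_1,\cdots,x_n)=g_2^{x_1}g_4^{x_2}\cdots g_{2n}^{x_n}$ with $x_i\in\{0,1\}$. Using only the relations $g_jg_1=g_1^{-1}g_j$ ($j$ even) and $g_{j+1}g_j=g_1g_jg_{j+1}$, equivalently Lemma 6.1, I would check three facts: (i) $H=\langle g_1,g_3,\cdots,g_{2n+1}\rangle$ is abelian and contains $u$, so $H$ centralizes $u$; (ii) $E=\langle g_2,g_4,\cdots,g_{2n}\rangle$ is abelian and normalizes $H$ (each $g_{2j}$ inverts $g_1$, sends $g_{2j+1}$ to $g_1^{-1}g_{2j+1}\in H$, and commutes with the remaining odd generators); and (iii) hence $G_n=HE$. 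Writing an arbitrary conjugator as $x=hy$ with $h\in H$, $y\in E$, the element $yuy^{-1}$ lies in $H$ and so is fixed under conjugation by $h$, whence $xux^{-1}=yuy^{-1}$. Since $E$ is abelian, $y$ can be taken in the form $g_2^{m_1}g_4^{m_2}\cdots g_{2n}^{m_n}$; and since, as the formula below shows, only the residues $m_i\bmod 2$ affect the outcome, we may assume $m_i\in\{0,1\}$. So $u\sim v$ iff $w(x)\,u\,w(x)^{-1}=v$ for some $x\in\{0,1\}^n$.

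The heart of the matter is the conjugation formula displayed just before the theorem, which I would prove by induction on $n$. Write $w=w(x_1,\cdots,x_n)=w'\,g_{2n}^{x_n}$ with $w'=g_2^{x_1}\cdots g_{2n-2}^{x_{n-1}}$, and $u=u'\,g_{2n+1}^{k_n}$ with $u'=g_3^{k_1}\cdots g_{2n-1}^{k_{n-1}}$. Since $g_{2n}$ commutes with every generator occurring in $u'$, and $w'$ commutes with $g_{2n+1}$,
\[ w\,u\,w^{-1} = w'\,u'\,\bigl(g_{2n}^{x_n}g_{2n+1}^{k_n}g_{2n}^{-x_n}\bigr)\,{w'}^{-1} = w'\,u'\,g_1^{-k_nx'_n}\,{w'}^{-1}\,g_{2n+1}^{k_n}, \]
where the parenthesised factor is evaluated by two applications of Lemma 6.1: first $g_{2n+1}^{k_n}g_{2n}^{-x_n}=g_1^{k_nx'_n}g_{2n}^{-x_n}g_{2n+1}^{k_n}$, then $g_{2n}^{x_n}g_1^{k_nx'_n}g_{2n}^{-x_n}=g_1^{k_nx'_n(-1)^{x_n}}=g_1^{-k_nx'_n}$, the last sign being absorbed because $t(-1)^t=-t$ for $t\in\{0,1\}$. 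Now insert ${w'}^{-1}w'$, apply the inductive hypothesis $w'u'{w'}^{-1}=g_1^{-p(k_1,\cdots,k_{n-1},x_1,\cdots,x_{n-1})}u'$ together with the identity $w'g_1^c{w'}^{-1}=g_1^{c(-1)^{x_1+\cdots+x_{n-1}}}$, and use that $g_1$ commutes with $u'$; this collapses the right-hand side to $g_1^{-p(k_1,\cdots,k_n,x_1,\cdots,x_n)}u$, the exponent having picked up exactly the new summand $k_nx'_n(-1)^{x_{n-1}+\cdots+x_1}$. The base case $n=1$ is the direct check $g_2^{x_1}g_3^{k_1}g_2^{-x_1}=g_1^{-k_1x'_1}g_3^{k_1}$.

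Granting the formula, $w(x)uw(x)^{-1}=v=g_1^{-M}u$ holds if and only if $g_1^{-p}=g_1^{-M}$ with $p=p(k_1,\cdots,k_n,x_1,\cdots,x_n)$, i.e.\ (as $g_1$ has infinite order) if and only if $p(k_1,\cdots,k_n,x_1,\cdots,x_n)=M$; and this polynomial equation, read over $x\in\{0,1\}^n$, is literally the defining equation of $TSSP(L,M)$, the reordering of the indices inside the sign exponents being irrelevant and $(-1)^{x_i}=(-1)^{x'_i}$. Combining this with the two previous paragraphs gives $u\sim v$ in $G_n$ iff $TSSP(L,M)$ is solvable, so $f$ is the desired reduction and $TSSP\leq_p$ (the conjugacy problem in $G_n$). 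I expect the only real obstacle to be organizational rather than conceptual: keeping the parity-and-sign bookkeeping in the inductive step exactly straight — the two signs produced by Lemma 6.1 together with the accumulated factor $(-1)^{x_1+\cdots+x_{n-1}}$ from pushing the single $g_1$-syllable back through $w'$ — and stating the decomposition $G_n=HE$ carefully enough that the reduction of a general conjugator to the standard form $g_2^{x_1}\cdots g_{2n}^{x_n}$ is genuinely rigorous rather than merely ``visible from the lemma.''
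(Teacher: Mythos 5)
Your proposal is correct and follows essentially the same route as the paper: the same pair $u=g_3^{k_1}\cdots g_{2n+1}^{k_n}$, $v=g_1^{-M}u$, the same use of Lemma~6.1, and the same inductive collection yielding $w(x)\,u\,w(x)^{-1}=g_1^{-p(k,x)}u$ so that conjugacy is equivalent to $p=M$. The only differences are cosmetic improvements: you peel syllables from the inner end rather than conjugating syllable by syllable from the outside, and your $G_n=HE$ decomposition makes rigorous the paper's informal ``without loss of generality'' restriction of conjugators to $g_2^{x_1}\cdots g_{2n}^{x_n}$.
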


\begin{proof}
To prove this, we show that solving the conjugacy problem, $g_3^{k_1} g_5^{k_2} \cdots g_{2n + 1}^{k_n} \sim g_1^{-M} g_3^{k_1} g_5^{k_2} \cdots g_{2n + 1}^{k_n}$, in $G_n$ yields a solution to  TSSP$(\{k_1, k_2 \cdots , k_n\}, M)$.

\

We proceed by induction on $l$ where we conjugate by the last $l$ syllables of the generic word. Rather than starting with $l = 1$ it may clarify the computation to start with $l = 2$.  In this case we collect:

$$(g_{2n -2}^{x_{n - 1}} g_{2n}^{x_n}) (g_3^{k_1} g_5^{k_2} \cdots g_{2n + 1}^{k_n})(g_{2n -2}^{x_{n - 1}} g_{2n}^{x_n}) ^{-1}$$

Conjugating first by  $g_{2n}^{x_n}$, we find:
$$ g_{2n}^{x_n}(g_3^{k_1} g_5^{k_2} \cdots g_{2n + 1}^{k_n}) g_{2n}^{-x_n} =$$
$$g_3^{k_1} g_5^{k_2} \cdots  g_{2n}^{x_n} (g_{2n + 1}^{k_n}  g_{2n}^{-x_n}) =$$
$$g_3^{k_1} g_5^{k_2} \cdots g_{2n}^{x_n} (g_1^{k_n x'_n} g_{2n}^{- x_n} g_{2n + 1}^{k_n}) =$$
$$g_1^{k_n x'_n (-1)^{x_n}} g_3^{k_1} g_5^{k_2} \cdots  g_{2n + 1}^{k_n} =$$
$$g_1^{- k_n x'_n} g_3^{k_1} g_5^{k_2} \cdots  g_{2n + 1}^{k_n}$$

Negt we conjugate by $g_{2n -2}^{x_{n - 1}}$:
$$g_{2n -2}^{x_{n - 1}} (g_1^{- k_n x'_n} g_3^{k_1} g_5^{k_2} \cdots g_{2n - 1}^{k_{n - 1}} g_{2n + 1}^{k_n}) g_{2n -2}^{-x_{n - 1}} =$$
$$g_1^{- k_n x'_n (-1)^{x_{n-1}}} g_3^{k_1} g_5^{k_2} \cdots g_{2n -2}^{x_{n - 1}} (g_{2n - 1}^{k_{n - 1}}  g_{2n -2}^{-x_{n - 1}}) g_{2n + 1}^{k_n} =$$
$$g_1^{- k_n x'_n (-1)^{x_{n-1}}} g_3^{k_1} g_5^{k_2} \cdots g_{2n -2}^{x_{n - 1}} (g_1^{k_{n - 1}x'_{n -1}} g_{2n -2}^{-x_{n - 1}} g_{2n - 1}^{k_{n - 1}} ) g_{2n + 1}^{k_n} =$$
$$g_1^{- k_n x'_n (-1)^{x_{n-1}} - k_{n - 1}x'_{n-1}} g_3^{k_1} g_5^{k_2} \cdots g_{2n - 1}^{k_{n - 1}}  g_{2n + 1}^{k_n} =$$
$$g_1^{-p(k_{n-1},k_n,x_{n-1},x_n)} g_3^{k_1} g_5^{k_2} \cdots g_{2n - 1}^{k_{n - 1}}  g_{2n + 1}^{k_n}$$

We now induct and assume the result holds for $l = n -1$ and show it holds for $l = n$ .  In this case we have:
$$(g_2^{x_1} g_4^{x_2} \cdots g_{n}^{x_{n}})(g_3^{k_1} g_5^{k_2} \cdots g_{2n + 1}^{k_n})(g_2^{x_1} g_4^{x_2} \cdots g_{n}^{x_{n}})^{-1} = $$
$$g_2^{x_1} (g_1^{-p(k_2, \cdots , k_n, x_2, \cdots ,  x_n)} g_3^{k_1} \cdots g_{2n + 1}^{k_n}) g_2^{-x_1}$$  Conjugating by $g_2^{x_1}$ then yields:

$$g_2^{x_1} (g_1^{p(k_2, \cdots , k_n, x_2, \cdots ,  x_n)} g_3^{k_1} \cdots g_{2n + 1}^{k_n}) g_2^{-x_1} = $$
$$g_1^{-p(k_2, \cdots , k_n, x_2, \cdots ,  x_n)(-1)^{x_1}} g_2^{x_1} (g_3^{k_1}  g_2^{-x_1}) \cdots g_{2n + 1}^{k_n}  = $$
$$g_1^{-p(k_2, \cdots , k_n, x_2, \cdots ,  x_n)(-1)^{x_1}}g_2^{x_1} (g_1^{k_1 x'_1} g_2^{-x_1} g_3^{k_1}) \cdots g_{2n + 1}^{k_n} = $$
$$g_1^{-p(k_2, \cdots , k_n, x_2, \cdots ,  x_n)(-1)^{x_1} - k_1 x'_1} g_3^{k_1} \cdots g_{2n + 1}^{k_n} = $$

$$g_1^{-p(k_1, k_2, \cdots , k_n, x_1, x_2, \cdots ,  x_n)}  g_3^{k_1} \cdots g_{2n + 1}^{k_n}$$

\

It is now enough to note that any possible solution to the above conjugacy problem would give you a solution to the equivalent $TSSP$ instance by eliminating all the $g_j$ with $j$ odd and reducing all the exponents modulo 2.  Therefore a ``yes" answer to the conjugacy decision problem using any algorithm would imply the existence of a solution to the $TSSP$.
\end{proof}

\section{The Conjugacy Problem In $G_n$ Is In NP}
In this section we show that the conjugacy problem in the groups $G_n$ can in fact be checked efficiently.  To do this we will find closed form expressions for conjugating a word by a power of a single generator.  These closed form expressions will be effectively computable with group elements in their normal form.  Since conjugating by a single syllable can be done in polynomial time, conjugating by $2n + 1$ of them is also polynomial time.  Therefore, checking conjugacy is efficient.  These methods can also be used to create closed form expressions for multiplying and collecting elements in normal form.

\

When conjugating elements in $G_n$ there are three cases to consider: conjugation by powers $g_1$, conjugation by powers of $g_j$ with $j$ even, and conjugation by powers of $g_l$ where $l$ is odd and larger than 1.

\

For the first case we collect

$$g_1^k (g_1^{k_1} \cdots g_{2n+1}^{k_{2n +1}}) g_1^{-k}$$

Since each of the even $g_j$ invert $g_1$, when we bring the $g_1^{-k}$ to the left we switch the sign of the exponent according to the parity of the exponents of the even indexed $g_j$.  Also, the odd $g_l$ commute with $g_1$, and do not affect the collection process. Therefore we end up with:

\begin{equation} g_1^{k + k_1 - k(-1)^{k_2 + k_4 + \cdots + k_{2n}}} g_2^{k_2} \cdots g_{2n+1}^{k_{2n +1}} \end{equation}

\

The second case is then collecting

$$g_j^k (g_1^{k_1} \cdots g_{2n+1}^{k_{2n +1}}) g_j^{-k}$$

where $j$ is even.

\

We first move the $g_j^k$ right.  Hopping over the $g_1^{k_1}$ may change the sign of the exponent, but after that, each $g_i$ commutes with $g_j$ for $i < j$.  Therefore as a first step we end up with:

$$(g_1^{k_1(-1)^{k}} g_2^{k_2} \cdots g_j^{k + k_j} \cdots  g_{2n+1}^{k_{2n +1}}) g_j^{-k}$$

In moving the $g_j^{-k}$ to the left, the only thing that doesn't commute is $g_{j+1}$.  To hop over $g_{j+1}^{k_{j+1}}$ we can use {\bf Lemma 6.1} and get

$$g_1^{k_1(-1)^k} g_2^{k_2} \cdots g_j^{k + k_j} (g_{j+1}^{k_{j+1}}  g_j^{-k}) \cdots  g_{2n+1}^{k_{2n +1}} =$$
$$g_1^{k_1(-1)^k} g_2^{k_2} \cdots g_j^{k + k_j} (g_1^{k_{j+1} k'} g_j^{-k}  g_{j+1}^{k_{j+1}})   \cdots  g_{2n+1}^{k_{2n +1}} $$

Finally, we move the $g_1^{k_{j+1} k'}$ to the left to end up with:

\begin{equation} g_1^{k_1(-1)^k + k_{j+1} k'(-1)^{k_2 + k_4 + \cdots + k_j + k}} g_2^{k_2} \cdots g_j^{k_j} g_{j+1}^{k_{j+1}}   \cdots  g_{2n+1}^{k_{2n +1}} \end{equation}

The third case is dealt with similarly to the second. When $l > 1$ is odd:

$$g_l^k (g_1^{k_1} \cdots g_{2n+1}^{k_{2n +1}}) g_l^{-k} =$$
$$ g_1^{k_1} \cdots (g_l^k g_{l - 1}^{k_{l - 1}}) g_l^{k_l - k}  \cdots g_{2n+1}^{k_{2n +1}} =$$
$$ g_1^{k_1} \cdots (g_1^{k k'_{l - 1}}g_{l - 1}^{k_{l - 1}} g_l^{k})g_l^{k_l - k}  \cdots g_{2n+1}^{k_{2n +1}} =$$
\begin{equation} g_1^{k_1 + k k'_{l - 1}(-1)^{k_2 + k_4 + \cdots + k_{l - 3}}} g_2^{k_2} \cdots g_{2n+1}^{k_{2n +1}} \end{equation}

\

Since conjugation is done by successively conjugating elements of the form of those in $(3)$, $(4)$, and $(5)$ these closed forms can iteratively perform a general conjugation.  Such a computation can be performed in polynomial time in terms of $n\log(K)$ because computing the normal form after conjugation by each syllable can be done in polynomial time using the closed forms, and need only be performed $n$ times.  This means that we can create a polynomial time verifier for the conjugacy problem in the $G_n$.

\

These normal forms also provide us with the following corollaries that describe conjugation in the group.  The proofs for both statements can be seen directly be inspecting the closed forms above.

\begin{cor}
Let $u, v \in G_n$ where $u = g_1^{e_1} \cdots g_{2n + 1}^{e_{2n + 1}}$ and $v = g_1^{f_1} \cdots g_{2n + 1}^{f_{2n + 1}}$.
\begin{enumerate}[i.]
\item $u \sim v$ implies $e_i = f_i$ for $i \geq 2$.
\item Let $e_i = f_i$ for $i \geq 2$.  If there exists an even $l - 1$ such that $e_{l - 1} = f_{l - 1}$ is odd, the $u$ and $v$ are conjugate.  In fact, one such element that conjugates $u$ to $v$ is:
 $$g_{l}^{(f_1 - e_1)(-1)^{e_2 + e_4 + \cdots +  e_{l - 3}}}$$
\end{enumerate}
\end{cor}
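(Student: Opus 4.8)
The plan is to derive both parts directly from the closed-form conjugation expressions $(3)$, $(4)$, and $(5)$ obtained just above the statement. The single structural fact that does all the work is visible in each of those three formulas: conjugating a normal-form word $g_1^{k_1}\cdots g_{2n+1}^{k_{2n+1}}$ by a power of a single generator again produces a word in normal form whose syllable exponents coincide with the original ones in every position except possibly the first.

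For part (i), first I would write an arbitrary conjugator in normal form, $x=g_1^{a_1}\cdots g_{2n+1}^{a_{2n+1}}$, so that $xux^{-1}$ is obtained from $u$ by conjugating successively by $g_{2n+1}^{a_{2n+1}}$, then $g_{2n}^{a_{2n}}$, and so on down to $g_1^{a_1}$. Each such step is conjugation by a power of one generator and hence matches exactly one of the cases $(3)$ (index $1$), $(4)$ (even index), or $(5)$ (odd index $>1$); by the observation above, each step keeps the word in normal form and fixes the exponents in positions $2,\dots,2n+1$. An induction on the number of nontrivial syllables of $x$ then gives that $xux^{-1}$ has the same exponents $e_2,\dots,e_{2n+1}$ as $u$. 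Applying this to an $x$ with $xux^{-1}=v$ and invoking uniqueness of normal forms yields $e_i=f_i$ for all $i\ge 2$.

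For part (ii), note first that $l-1$ being an even syllable index forces $l-1\in\{2,4,\dots,2n\}$, so $l$ is odd with $3\le l\le 2n+1$ and $g_l$ is an actual generator. I would then apply $(5)$ with $k=t:=(f_1-e_1)(-1)^{e_2+e_4+\cdots+e_{l-3}}$, reading the exponent sum as empty (hence $0$) when $l=3$: conjugating $u=g_1^{e_1}\cdots g_{2n+1}^{e_{2n+1}}$ by $g_l^{t}$ gives $g_1^{\,e_1+t\,e'_{l-1}(-1)^{e_2+e_4+\cdots+e_{l-3}}}g_2^{e_2}\cdots g_{2n+1}^{e_{2n+1}}$, where $e'_{l-1}$ denotes $e_{l-1}\bmod 2$ as elsewhere. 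Since $e_{l-1}$ is odd we have $e'_{l-1}=1$, and since a sign squared is $1$ the new first exponent equals $e_1+(f_1-e_1)=f_1$; combined with $e_i=f_i$ for $i\ge 2$ this is precisely the normal form of $v$, so $g_l^{t}ug_l^{-t}=v$.

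The honest assessment is that there is no substantial obstacle: the corollary is essentially a restatement of formulas $(3)$--$(5)$. The points that deserve a sentence of care are, in part (i), the bookkeeping that a general conjugation breaks into these three elementary moves and that normal form is preserved at every intermediate stage (so that comparing exponents is meaningful), and, in part (ii), the $l=3$ boundary case with its empty exponent sum together with the remark that oddness of $e_{l-1}$ is precisely what makes the coefficient $e'_{l-1}$ in $(5)$ equal $1$ — the feature that lets $g_l$ serve as a conjugator and that fixes the exponent of the stated element.
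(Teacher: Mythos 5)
Your proposal is correct and follows exactly the route the paper intends: the paper's own justification is the one-line remark that both statements ``can be seen directly by inspecting the closed forms above,'' and your argument is precisely that inspection carried out in detail, decomposing a general conjugation into single-syllable conjugations of types $(3)$, $(4)$, $(5)$ for part (i) and specializing $(5)$ with $e'_{l-1}=1$ for part (ii). No discrepancies with the paper's approach.
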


Note that part $ii$ of the above corollary does not include the difficult cases of the conjugacy problem that we saw in {\bf Section 6}.

\

Now we check that there exists a certificate that is of polynomial length with respect to any  instance of the conjugacy problem $G_n$, $u \sim v$.   Let $u = g_1^{e_1} g_2^{e_2} \cdots g_{2n + 1}^{e_{2n + 1}}$ and $v = g_1^{f_1} g_2^{e_2} \cdots g_{2n + 1}^{e_{2n + 1}}$ and $w = g_1^{x_1} \cdots g_{2n + 1}^{x_{2n + 1}}$ such that $wuw^{-1} = v$.  In the case that there exists an odd exponent above an even indexed generator, we have a certificate of polynomial length from part $ii$ of {\bf Corollary 7.1}.  Therefore, we can assume that for all $j$ even, $e_j$ is even.

\

In this case, we also know there exists a conjugator where $x_l = 0$ for all $l$ odd and greater than 1 by inspecting the closed forms from $(5)$.  Additionally we can take $x_j$ to be $0$ or $1$ for $j$ even by looking at the closed forms from $(4)$.  It remains to put bounds on $x_1$.

\

Let $y$ be the exponent above $g_1$ conjugating by $g_2^{x_2} \cdots g_{2n + 1}^{x_{2n + 1}}$. Then by repeated applications of $(4)$, $|y| \leq |e_1| + |e_3| + \cdots + |e_{2n + 1}|$.  From $(3)$, conjugating by $g_1^{x_1}$ either increases the exponent by $2x_1$ or leaves it unchanged.  Therefore if $f_1 = y$, we can take $x_1$ to be 0 and then clearly a certificate has length $O(n)$.  Otherwise, $f_1 = y + 2x_1$ implying that $|x_1| \leq |f_1| + |y| \leq |f_1| +  |e_1| + |e_3| + \cdots + |e_{2n + 1}|$.  This means that the length of a certificate is bounded from above by $\log(|f_1| +  |e_1| + |e_3| + \cdots + |e_{2n + 1}|) + n$ which is of polynomial size in the length of the original conjugacy problem.  This now shows that the conjugacy problem in $G_n$ is in NP.

\

One could also compose these operations to find a single closed form for conjugation in general.  Such a closed form, would be not unlike the one computed in the previous section, but altogether much more complicated.  If instead we consider right or left multiplication by syllables, we can obtain closed forms for multiplication of normal forms.  By using these closed forms, we can also perform these algebraic operations with elements represented by exponent vectors in polynomial time.

\section{Reduction of TSSP to SSP}
In this section we show that $SSP \leq_p TSSP$.  To make this easier we introduce another problem $SSP'$ that is similar to $SSP$ and in fact show that $SSP \leq_p SSP' \leq_p TSSP$.  We define $SSP'$ as follows:  given a list of integers $\{k_1, \cdots, k_n\}$ and an integer $M$, decide if there exists a solution to the equation:

$$k_1 x_1 + \cdots k_n x_n = M \,\,\,\, \mbox{where} \,\,\,\, x_1 , \cdots, x_n \in \{-1,0,1\}$$

\begin{lem}
$SSP' \leq_p TSSP$.
\end{lem}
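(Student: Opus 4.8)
The plan is to give the reduction explicitly and then verify the two implications. Given an instance $SSP'(\{k_1,\dots,k_n\},M)$, map it to the $TSSP$ instance with target $M$ whose list, of length $2n$, is obtained by inserting a $0$ in front of each coefficient:
$$\{0,\,k_1,\,0,\,k_2,\,0,\,k_3,\,\dots,\,0,\,k_n\}.$$
The entries are still bounded by $K$, so this map is computable in polynomial time and enlarges the input length only linearly; hence it is a genuine polynomial time reduction, and it remains to check that it preserves ``yes'' instances.

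Relabel the new list as $\{k'_1,\dots,k'_{2n}\}$ with $k'_{2j-1}=0$, $k'_{2j}=k_j$, and let $x_1,\dots,x_{2n}$ be the associated $TSSP$ variables; write $s_i := x_1+\dots+x_{i-1}\bmod 2$. Because each $k'_{2j-1}=0$, the $TSSP$ expression of the new instance simplifies, for every assignment, to $\sum_{j=1}^n k_j\, x_{2j}\,(-1)^{s_{2j}}$. The direction ``$TSSP$ solution $\Rightarrow$ $SSP'$ solution'' is then immediate: from a solution $(x_i)$, set $y_j := x_{2j}(-1)^{s_{2j}}\in\{-1,0,1\}$; then $\sum_j k_j y_j$ equals the $TSSP$ value $M$, so $(y_j)$ solves $SSP'$.

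For the converse I would show the inserted $0$'s give complete control of the signs $(-1)^{s_{2j}}$. The key identity is $s_{2j+2}\equiv s_{2j}+x_{2j}+x_{2j+1}\pmod 2$, in which $x_{2j+1}$ is the variable over the $0$ inserted before $k_{j+1}$; so, for any prescribed parities $t_1,\dots,t_n\in\{0,1\}$ and any fixed values of $x_2,x_4,\dots,x_{2n}$, one can choose $x_1,x_3,\dots,x_{2n-1}$ successively (starting from $s_2=x_1$) so that $s_{2j}\equiv t_j$ for all $j$. Now, given an $SSP'$ solution $(y_j)$, set $x_{2j}:=|y_j|$, take $t_j:=0$ if $y_j\ge 0$ and $t_j:=1$ if $y_j=-1$, and pick the $0$-variables realizing these parities; then $k_j x_{2j}(-1)^{s_{2j}}=k_j y_j$ for each $j$, so the $TSSP$ expression evaluates to $\sum_j k_j y_j=M$, yielding a $TSSP$ solution.

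Combining the two implications gives $SSP'\leq_p TSSP$. The only substantive point is the sign-freedom claim in the converse, which is a one-line induction once the ``a $0$ before every coefficient'' gadget is isolated; the other direction and the size bound are routine. I do not anticipate a real obstacle, aside from noting that the $TSSP$ list is allowed to contain repeated entries and the value $0$, which is fine under the definition in use.
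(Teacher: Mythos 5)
Your proposal is correct and uses essentially the same reduction as the paper: map $SSP'(\{k_1,\dots,k_n\},M)$ to $TSSP(\{0,k_1,0,k_2,\dots,0,k_n\},M)$, using the inserted zero variables to control the alternating signs. The paper only writes out the direction from an $SSP'$ solution to a $TSSP$ solution (via an explicit assignment of the zero-position variables), whereas you also spell out the easy converse and the size bound, but the underlying argument is the same.
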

\begin{proof}
Consider $SSP'(\{k_1, \cdots, k_n\}, M)$ and $TSSP(\{0, k_1, 0, k_2, \cdots, 0, k_n\}, M)$, instances of $SSP'$ and $TSSP$ respectively.  In this case, we have that  if $(x_1, x_2, \cdots, x_n)$ is a solution for  $SSP'(\{k_1, \cdots, k_n\}, M)$ then $(y_1, y_2, \cdots, y_{2n})$ is a solution for the corresponding $TSSP$ problem where $y_{2i} = |x_i|$ and $y_{2i - 1} = 1$ if:

\centerline{$x_i = -1$ and $y_1 + \cdots + y_{2i - 2}$ is even ($-k_i$ appears in the sum)}
\centerline{or}
\centerline{$x_i =  1$ and $y_1 + \cdots + y_{2i - 2}$ is odd ($k_i$ appears in the sum)}

and is 0 otherwise.
\end{proof}

It is more work to then show that $SSP \leq_p SSP'$. We adapt a proof from the appendix of \cite{kellerer2004knapsack} by Kellerer, Pferschy, and Pisinger.  Consider the following systems of equations:

\begin{equation} \begin{cases} \sum_{i = 1}^{n} k_i x_i = M \\ x_i \in \{0,1\} \end{cases} \end{equation}
\begin{equation} \begin{cases}  \sum_{i = 1}^{n} k_i x_i = M \\ x_i + y_i = 1  \,\,\,\, \mbox{for} \,\,\,\, i =1,\cdots, n \\ x_i, y_i \in \{-1, 0,1\} \end{cases} \end{equation}
\begin{equation}  \begin{cases} \sum_{i = 1}^{n} (4^{n - i} + 4^n k_i) x_i + \sum_{i = 1}^{n} 4^{n - i} y_i  = 4^n M + 4^n - 1 \\ x_i, y_i \in \{-1, 0,1\} \end{cases} \end{equation}

First, note that $(6)$ and $(7)$ have equivalent solutions: any set of $x_i$ that satisfies one will satisfy the other.  The constraints $x_i + y_i = 1$ and $x_i, y_i \in \{-1, 0,1\}$ prevent $x_i$ from ever being $-1$.  What is less apparent is that $(7)$ and $(8)$ have the same solution set.  If this is the case, we can solve any instance of $SSP$, $(6)$, using an algorithm that solves the equivalent $SSP'$ $(8)$.   If we also show that the size of $(8)$ is only polynomially larger than $(6)$ then we will have in fact shown that $SSP \leq_p SSP'$ and proving that both $SSP$ and $TSSP$ are NP-complete.

\

\begin{prop}
The following systems of equations have the same set of solutions:
\begin{equation}  \begin{cases}  \sum_{i = 1}^{n} k_i x_i = M \\ x_1 + y_1 = 1 \\ x_i, y_i \in \{-1, 0,1\} \end{cases} \end{equation}
\begin{equation}  \begin{cases}  x_1 + y_1 + 4\sum_{i = 1}^{n} k_i x_i = 4M + 1 \\ x_i, y_i \in \{-1, 0,1\} \end{cases} \end{equation}
\end{prop}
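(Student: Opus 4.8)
The plan is to establish the set equality by proving the two inclusions separately; the forward inclusion is a one-line substitution and the reverse inclusion reduces to a single congruence observation modulo $4$.

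First I would dispatch the easy direction. Suppose $(x_i,y_i)$ satisfies the first system, so that $\sum_{i=1}^n k_i x_i = M$ and $x_1 + y_1 = 1$ with all $x_i, y_i \in \{-1,0,1\}$. Substituting these two equalities into the left-hand side of the second system gives $x_1 + y_1 + 4\sum_{i=1}^n k_i x_i = 1 + 4M = 4M + 1$, so $(x_i,y_i)$ satisfies the second system as well, and the range condition is carried over unchanged.

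For the reverse direction, suppose $(x_i,y_i)$ with $x_i,y_i \in \{-1,0,1\}$ satisfies $x_1 + y_1 + 4\sum_{i=1}^n k_i x_i = 4M+1$. I would isolate the small term: $x_1 + y_1 = 4M + 1 - 4\sum_{i=1}^n k_i x_i = 1 + 4\bigl(M - \sum_{i=1}^n k_i x_i\bigr)$, whence $x_1 + y_1 \equiv 1 \pmod 4$. Because $x_1, y_1 \in \{-1,0,1\}$, the integer $x_1 + y_1$ lies in $\{-2,-1,0,1,2\}$, and the unique element of this set congruent to $1$ modulo $4$ is $1$ itself. Therefore $x_1 + y_1 = 1$, and substituting this back yields $4\sum_{i=1}^n k_i x_i = 4M$, i.e. $\sum_{i=1}^n k_i x_i = M$. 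Hence $(x_i,y_i)$ satisfies the first system, completing the proof.

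There is no real obstacle here: the whole argument is the observation that scaling the ``mass'' equation by $4$ renders the perturbation $x_1 + y_1$ visible modulo $4$ and pins it to its intended value, which is exactly why the integrality constraint $x_i, y_i \in \{-1,0,1\}$ is essential. I would close with a remark that this is precisely the mechanism behind the passage from system $(7)$ to system $(8)$ above: the coefficients $4^{n-i}$ and $4^n k_i$ are chosen so that each of the $n$ constraints $x_i + y_i = 1$ can be folded into a single linear equation while the accumulated contributions of the $y_i$ stay in a range narrow enough to be separated modulo successive powers of $4$, so that once this proposition is in hand the general reduction $SSP \leq_p SSP'$ follows by the same bookkeeping together with the evident polynomial bound on the bit-length of $(8)$.
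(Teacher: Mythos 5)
Your proof is correct and uses essentially the same mechanism as the paper: the paper bounds $1 - x_1 - y_1$ between $-1$ and $3$ and concludes the multiple of $4$ must vanish, while you phrase the identical fact as a congruence modulo $4$ pinning $x_1 + y_1 = 1$. The only cosmetic difference is that you recover $x_1 + y_1 = 1$ first and then $\sum_{i=1}^{n} k_i x_i = M$, whereas the paper does it in the other order; both verifications are equivalent.
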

\begin{proof}
First note that anything that is a solution to $(9)$ is a solution to $(10)$.  In the other direction, assume that $(x_1, \cdots, x_n, y_1)$ is a solution to $(10)$.  Note that that is equivalent to saying that:

\begin{equation}  4(\sum_{i = 1}^{n} k_i x_i  - M) = 1 - x_1 - y_1 \end{equation}

Using the fact that$-1 \leq 1 - x_1 - y_1\leq 3$ we then get

\begin{equation}  -1 \leq  4(\sum_{i = 1}^{n} k_i x_i  - M) \leq 3   \end{equation}

Finally, since $\sum_{i = 1}^{n} k_i x_i  - M$ is an integer, $(12)$ can only be satisfied if $\sum_{i = 1}^{n} k_i x_i - M = 0$ implying that $(x_1, \cdots, x_n)$ is a solution for $(9)$.
\end{proof}
\begin{prop}
The systems of equations $(7)$ and $(8)$ have the same set of solutions.
\end{prop}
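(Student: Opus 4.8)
The plan is to establish the set equality by proving the two inclusions, obtaining the reverse one by reusing Proposition~8.2 (the single-constraint case, relating $(9)$ and $(10)$) inductively, eliminating the constraints $x_i + y_i = 1$ one at a time.

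The forward inclusion $(7)\Rightarrow(8)$ is immediate: given a solution of $(7)$, regroup the left side of $(8)$ as $\sum_{i=1}^n 4^{n-i}(x_i+y_i) + 4^n\sum_{i=1}^n k_ix_i$, substitute $x_i+y_i=1$ and $\sum k_ix_i=M$, and use the identity $\sum_{i=1}^n 4^{n-i} = \frac{4^n-1}{3}$; the domain constraints are identical. Note that this computation forces the constant on the right side of $(8)$ to equal $4^nM + \frac{4^n-1}{3} = 4^nM + \sum_{i=1}^n 4^{n-i}$, which is the form produced by the reduction below.

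For the reverse inclusion I would induct on $n$. Applying Proposition~8.2 to the first constraint replaces the pair of constraints $\sum k_ix_i = M$ and $x_1+y_1=1$ by the single equation $x_1 + y_1 + 4\sum_{i=1}^n k_ix_i = 4M+1$, so that this equation together with $x_i+y_i=1$ for $i\ge 2$ and the box constraints $x_i,y_i\in\{-1,0,1\}$ is equivalent to $(7)$. Treating the result as a fresh instance of the same shape with one fewer equality constraint and applying Proposition~8.2 again absorbs $x_2+y_2=1$, and so on; after $n$ steps the only surviving linear equation is the left side of $(8)$ set equal to $4^nM + \sum_{j=0}^{n-1}4^j$, with only the box constraints remaining, which is $(8)$. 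Equivalently one can run the descent in closed form: set $z_i := x_i+y_i-1 \in \{-3,-2,-1,0,1\}$ and $A := \sum k_ix_i - M$, substitute into $(8)$, cancel the constants to obtain $\sum_{i=1}^n 4^{n-i}z_i = -4^nA$, reduce modulo $4$ to force $z_n = 0$, divide by $4$, and iterate to force $z_{n-1}=0,\dots,z_1=0$ and $A=0$; unwinding the substitutions yields a solution of $(7)$.

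The only real obstacle is the bookkeeping that keeps the ``shape'' of the system stable under the iteration: at each step the newly absorbed term $x_m+y_m$ must enter with coefficient $1$, so that Proposition~8.2 applies verbatim---its proof uses only $x_m,y_m\in\{-1,0,1\}$ and divisibility by $4$---and one must track that the coefficient of $x_i$ accumulates the factor $4^{m-i}$ (and, at the last step, the further summand $4^nk_i$) while the constant accumulates as the geometric sum $\sum_{j<m}4^j$. This is routine, but must be carried through carefully enough that the compiled system is literally $(8)$, with the corrected right-hand side.
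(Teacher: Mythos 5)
Your proof is correct, and your main route---absorbing the constraints $x_i+y_i=1$ one at a time by iterating the preceding proposition (the one relating $(9)$ and $(10)$), whose argument indeed uses only $x_m,y_m\in\{-1,0,1\}$ together with integrality and divisibility by $4$---is exactly the proof the paper gives. Your remark about the constant is also right: the iteration yields the right-hand side $4^nM+4^{n-1}+4^{n-2}+\cdots+1=4^nM+\tfrac{4^n-1}{3}$, which is what the paper's own proof arrives at, so the ``$4^n-1$'' printed in $(8)$ is an error (as is the stray $2^{n-1}$ in the paper's displayed intermediate equation, which should be $4^{n-i}$); with the corrected constant the two systems do have the same solutions, and your forward-direction substitution confirms this. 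Your alternative closed-form descent---setting $z_i=x_i+y_i-1\in\{-3,\dots,1\}$, obtaining $\sum_i 4^{n-i}z_i=-4^nA$, and reducing modulo $4$ and dividing repeatedly to force $z_n=\cdots=z_1=0$ and $A=0$---is a clean self-contained proof of the reverse inclusion that avoids the bookkeeping of the iteration, but it is an optional extra rather than a different result.
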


\begin{proof}
As we did in the previous proposition, we combine the conditions $x_i + y_i = 1$ to the equation $\sum_{i = 1}^{n} k_i x_i = M$ to obtain an instance of $SSP'$ whose solution will yield a solution to the corresponding instance of the $SSP$.

\

We then continue as in the proposition, merging our system of equations into just one, by performing the same steps beginning with $x_1 + y_1 = 1$ and ending with $x_n + y_n = 1$.  Note, that as we perform each step, we are not changing the solution set.  After we have performed the first two steps we have the equation:

$$x_2 + y_2 + 4 x_1 + 4 y_1 + 4^2 \sum_{i = 1}^{n} k_i x_i = 4(4M + 1) + 1$$

and then after $n$ steps, we have obtained:

$$\sum_{i = 1}^n 2^{n - 1} x_i + \sum_{i = 1}^n 2^{n - 1} y_i + 4^n \sum_{i = 1}^{n} k_i x_i = 4^n M + 4^{n -1} + 4^{n -2} + \cdots + 1$$

After collecting like terms on the left and summing the geometric series on the right we have $(8)$.

\end{proof}

\begin{thm}
$SSP \leq_p SSP' \leq_p TSSP$ implying that the $TSSP$ is NP-complete and furthermore so is the conjugacy decision problem in the $G_n$.
\end{thm}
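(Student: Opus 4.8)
The plan is to splice together the reductions already assembled and then invoke membership in $\mbox{NP}$ at the two places it is needed. First I would record that $SSP \leq_p SSP'$. On the level of solution sets this is the composition of two facts established above: the instance $(6)$ of $SSP$ and the system $(7)$ have exactly the same solutions, since the constraints $x_i + y_i = 1$ together with $x_i,y_i \in \{-1,0,1\}$ force each $x_i$ into $\{0,1\}$; and $(7)$ and the $SSP'$-instance $(8)$ have the same solutions by \textbf{Proposition 8.5}. Consequently the map sending $SSP(\{k_1,\dots,k_n\},M)$ to the instance of $SSP'$ with list $\{\,4^{n-1}+4^{n}k_1,\ \dots,\ 4^{0}+4^{n}k_n,\ 4^{n-1},\ \dots,\ 4^{0}\,\}$ and target $4^{n}M + 4^{n}-1$ sends ``yes''-instances to ``yes''-instances and ``no''-instances to ``no''-instances.

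Next I would verify that this map is a genuine polynomial-time reduction, i.e. that it is computable in polynomial time and enlarges instances only polynomially. The new list has $2n$ entries, each of absolute value at most $4^{n}(K+1)$ where $K$ bounds the $|k_i|$, so each entry has bit-length $O(n+\log K)$ and the whole new instance has size $O\!\left(n(n+\log K)\right)$, polynomial in the original size $O(n\log K)$; the powers of $4$ and the additions are clearly computed in polynomial time. Combining $SSP \leq_p SSP'$ with \textbf{Lemma 8.2} and the transitivity of $\leq_p$ (Section 4) yields $SSP \leq_p TSSP$. Since $SSP$ is NP-complete (one of Karp's original problems, Section 4) and $TSSP \in \mbox{NP}$ — a candidate solution $(x_1,\dots,x_n)\in\{0,1\}^n$ is a certificate of length $O(n)$ whose validity is checked by evaluating the defining sum, an $O(n)$-step arithmetic computation on numbers of bit-length $O(\log(nK))$ — we conclude that $TSSP$ is NP-complete.

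Finally I would transfer this to the groups. \textbf{Theorem 6.2} gives $TSSP$ with $n$ indeterminates $\leq_p$ the conjugacy decision problem over the $G_n$ (where the instance specifies $n$, hence the group $G_n$ of Hirsch length $2n+1$, together with two elements in normal form), so that problem is NP-hard; and Section 7 exhibits a polynomial-length certificate and a polynomial-time verifier for it via the closed forms $(3)$--$(5)$, so it lies in $\mbox{NP}$. Hence the conjugacy decision problem over the $G_n$ is NP-complete.

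The one step that warrants care — the main obstacle, modest as it is — is the size estimate: one must check that attaching the weights $4^{n-i}$ and $4^{n}k_i$ inflates the bit-length of each coefficient only additively, by $O(n)$, rather than multiplicatively, so that the chain of reductions stays polynomial. This is precisely where the binary encoding of $SSP$ and the exponent-vector measure of length for elements of $G_n$ (Sections 4 and 6) are indispensable; under a unary encoding the chain would collapse, consistent with the pseudo-polynomial dynamic-programming algorithm for $TSSP$ recalled in Section 5. Everything else is bookkeeping over results already proved.
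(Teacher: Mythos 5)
Your proposal is correct and follows essentially the same route as the paper: it composes the solution-set equivalences $(6)\Leftrightarrow(7)\Leftrightarrow(8)$ with Lemma 8.1 and transitivity of $\leq_p$, and the heart of the matter is exactly the paper's check that the coefficients $4^{n-i}+4^nk_i$ only add $O(n)$ bits per entry, so the instance grows polynomially and is computable in polynomial time. Your additional remarks on $TSSP$ and the conjugacy problem lying in NP simply make explicit what the paper delegates to Sections 5--7, so there is nothing to correct.
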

\begin{proof}
All that is left to show is that the process described in this section turns instances of $SSP$, $(6)$ into instances of $TSSP$, $(8)$ in polynomial time and that the size of the $TSSP$ instance is only polynomially larger than the $SSP$ instance.  First notice that $(8)$ has $2n$ indeterminates and since each of the $4^i$ can be expressed in $2n$ bits, the number of digits needed to express each coefficient increases linearly.  As such, $(8)$ is polynomially larger than $(6)$.  Additionally, the new coefficients can clearly be computed in polynomial time.
\end{proof}
Furthermore, from the argument in section 3, a polynomial time algorithm for the conjugacy search problem over the $G_n$ would imply P = NP.

\section{Acknowledgments and Support}
The authors would like to thank Vladimir Shpilrain for useful conversations on related topics.  We would also like to thank the anonymous reviewer for their many helpful comments.

\

Delaram Kahrobaei is partially supported by the Office of Naval Research grant N00014120758 and also supported by PSC-CUNY grant from the CUNY research foundation, as well as the City Tech foundation.

\bibliographystyle{plain}
\bibliography{conjnpbib}
\end{document}